\newtheorem{theorem}{Theorem}
\newtheorem{corollary}[theorem]{Corollary}
\newtheorem{lemma}[theorem]{Lemma}
\title{A simplified discharging proof of Gr\"otzsch theorem}
\author{Zden\v{e}k Dvo\v{r}\'ak\thanks{Computer Science Institute, Charles University, Prague, Czech Republic. E-mail: {\tt rakdver@iuuk.mff.cuni.cz}.}}
\begin{document}
\maketitle

\begin{abstract}
In this note, we combine ideas of several previous proofs in order to obtain a quite short proof of Gr\"otzsch theorem.
\end{abstract}

Gr\"otzsch~\cite{Gro} proved that every planar triangle-free graph is $3$-colorable, using the discharging method.
This proof was simplified by Thomassen~\cite{ThoGro} (who also gave a principally different proof~\cite{ThoShortlist}).
Dvo\v{r}\'ak et al.~\cite{DKT} give another variation of the discharging proof.  Both of the later arguments were developed in
order to obtain more general results (the Thomassen's proof gives extensions to girth $5$ graphs in the torus and the projective plane,
while the proof of Dvo\v{r}\'ak et al. aims at algorithmic applications), and thus their presentation of the proof of Gr\"otzsch theorem
is not the simplest possible.  In this note, we provide a streamlined version of the proof, suitable for teaching purposes.

We use the discharging method.  Thus, we consider a hypothetical minimal counterexample to Gr\"otzsch theorem (or more precisely,
its generalization chosen so that we are able to deal with short separating cycles) and show that it does not contain any of several
``reducible'' configurations.  Then, we assign charge to vertices and edges so that the total sum of charges is negative, and
redistribute the charge (under the assumption that no reducible configuration appears in the graph) so that the final charge of each
vertex and face is non-negative.  This gives a contradiction, showing that there exists no counterexample to Gr\"otzsch theorem.

A $3$-coloring $\varphi$ of a cycle $C$ of length at most $6$ is \emph{valid} if either $|C|\le 5$, or $|C|=6$ and there
exist two opposite vertices $u,v\in V(C)$ (i.e., both paths in $C$ between $u$ and $v$ have length three) such that $\varphi(u)\neq\varphi(v)$.
If $G$ is a plane triangle-free graph whose outer face is bounded by an induced cycle $C$ of length at most $6$ and $\varphi$ is
a valid coloring of $C$, then we say that the pair $(G,\varphi)$ is \emph{valid}.  We define a partial ordering $<$ on valid pairs as follows.
We have $(G_1,\varphi_1)<(G_2,\varphi_2)$ if either $|V(G_1)|<|V(G_2)|$, or $|V(G_1)|=|V(G_2)|$ and $|E(G_1)|>|E(G_2)|$.
A valid pair $(G,\varphi)$ is a \emph{minimal counterexample} if $\varphi$ does not extend to a $3$-coloring of $G$, but
for every valid pair $(G',\varphi')<(G,\varphi)$, the coloring $\varphi'$ extends to a $3$-coloring of $G'$.

Let us start with several basic reductions (eliminating short separating cycles, $4$- and $6$-faces), which are mostly standard.  Usually,
$6$-faces are eliminated by collapsing similarly to $4$-faces, which is necessary in the proofs that first eliminate the $4$-cycles and then
maintain girth five; in our setting, adding edges to transform them to $4$-faces is more convenient.

\begin{lemma}\label{lemma-basic}
If $(G,\varphi)$ is a minimal counterexample, then $G$ is $2$-connected, $\delta(G)\ge 2$, all vertices of degree two
are incident with the outer face, and every $(\le\!5)$-cycle in $G$ bounds a face.
\end{lemma}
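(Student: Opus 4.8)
The plan is to exploit two features of the minimal-counterexample setup: that $\varphi$ extends on every strictly smaller valid pair, and that the ordering $<$ regards a graph with the same vertex set but \emph{more} edges as strictly smaller. The latter says that a minimal counterexample is \emph{edge-maximal}: whenever $G+xy$ is again a valid pair (plane, triangle-free, with the same induced outer cycle $C$ and the same valid coloring $\varphi$), minimality lets us extend $\varphi$ to $G+xy$, and that coloring is also a proper coloring of $G\subseteq G+xy$ extending $\varphi$, a contradiction. I will use vertex deletion to control degrees, edge additions to force connectivity and $2$-connectivity, and a ``color one side, transfer the coloring across the cycle, color the other side'' argument for the short cycles.

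First the degree conditions. If $G$ has a vertex $v$ of degree at most $1$, then $v\notin V(C)$ (each vertex of $C$ has two neighbors on $C$), so $(G-v,\varphi)$ is a valid pair with fewer vertices; I extend $\varphi$ to $G-v$ and then color $v$ with a color avoided by its single possible neighbor. Similarly, if $v$ has degree exactly $2$ and is not incident with the outer face, then $v\notin V(C)$ (all of $V(C)$ is incident with the outer face), so $(G-v,\varphi)$ is again a smaller valid pair; I extend it and color $v$ with one of the (at least one) colors not used on its two neighbors. Both outcomes contradict minimality, yielding $\delta(G)\ge 2$ and that every degree-$2$ vertex is incident with the outer face.

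Next, $2$-connectivity, where I lean on edge-maximality. Suppose first that $G$ is disconnected. Only the component containing $C$ meets the outer face (whose boundary is exactly $C$), so some interior face $f$ is incident with two distinct components; I pick $x,y$ on $f$ from different components. Vertices in different components share no neighbor, so $G+xy$ is triangle-free; it is plane because $x,y$ lie on the common face $f$; and $C$ and $\varphi$ are untouched as $f$ is not the outer face. This contradicts edge-maximality, so $G$ is connected. Now suppose $G$ has a cutvertex $v$. Since $\delta(G)\ge 2$, every block is $2$-connected, so there is an interior face $f$ meeting two different blocks at $v$. The fiddly point---and the main obstacle---is to choose $x,y$ on $f$ in these two blocks with no common neighbor, so that $G+xy$ stays triangle-free. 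Taking $x$ to be a neighbor of $v$ on $f$ in one block and $y$ a vertex of $f$ at distance two from $v$ in the other block works: $y$ is not adjacent to $v$ (girth at least $4$), and any common neighbor of $x$ and $y$ would have to lie in both blocks and hence equal $v$, which is excluded. Thus $G+xy$ again contradicts edge-maximality, so $G$ is $2$-connected; in particular its outer face is bounded by $C$ and the vertices incident with it are exactly $V(C)$.

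Finally, that every cycle $K$ with $|K|\le 5$ bounds a face. Suppose not. Since $G$ is triangle-free, a chord of such a cycle would create a triangle, so $K$ is induced. The cycle $K$ splits the plane into an inside and an outside; let $G_1$ and $G_2$ be the subgraphs of $G$ drawn on the closed inside and the closed outside of $K$, so that $G_1\cap G_2=K$ and $G=G_1\cup G_2$. Because $K$ does not bound a face, the inside carries a vertex off $K$; because the outer face is bounded by $C\ne K$, the cycle $C$ lies outside $K$ and has a vertex off $K$. Hence $C\subseteq G_2$ and both $|V(G_1)|,|V(G_2)|<|V(G)|$. I color $G_2$ first: $(G_2,\varphi)$ is a smaller valid pair, so $\varphi$ extends, and its restriction $\psi$ to $K$ is proper; as $|K|\le 5$, the coloring $\psi$ is automatically valid. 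Re-embedding $G_1$ with $K$ as outer cycle, $(G_1,\psi)$ is a smaller valid pair, so $\psi$ extends to a $3$-coloring of $G_1$. The two colorings agree on $K$, so their union is a $3$-coloring of $G$ extending $\varphi$, contradicting that $(G,\varphi)$ is a counterexample. Therefore every $(\le\!5)$-cycle bounds a face.
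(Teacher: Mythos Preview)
Your proof follows the same strategy as the paper's: delete an interior vertex of small degree, add an edge across a $(\le\!1)$-separation to force $2$-connectivity, and for a short separating cycle colour the outside first, then the inside. One slip worth flagging: the assertion ``$\delta(G)\ge 2$ implies every block is $2$-connected'' is false (two $5$-cycles joined by a bridge is a triangle-free counterexample), but you do not actually need it---what matters is that the vertex $y$ two steps along the face walk from $v$ exists (guaranteed by $\deg(w)\ge 2$) and lies in the same component of $G-v$ as $w$, hence in a different component from $x$; any common neighbour would then have to be $v$, which is ruled out since $y\not\sim v$. The paper instead picks \emph{both} endpoints at distance at least two from the cutvertex, but your asymmetric choice works just as well.
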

\begin{proof}
If $G$ contained a vertex $v$ of degree at most two not incident with the outer face, then since $(G,\varphi)$ is
a minimal counterexample, the coloring $\varphi$ extends to a $3$-coloring of $G-v$.  However, we can then color $v$ differently
from its (at most two) neighbors, obtaining a $3$-coloring of $G$ extending $\varphi$.  This is a contradiction, and thus $G$ contains
no such vertex.  Note that all vertices of $G$ incident with the outer face have degree at least two, since the outer face is bounded
by a cycle.

Suppose that a $(\le\!5)$-cycle $K$ of $G$ does not bound a face.  Since $G$ is triangle-free, the cycle $K$ is induced.
Let $G_1$ be the subgraph of $G$ drawn outside (and including) $K$, and let $G_2$ be the subgraph of $G$ drawn inside (and including) $K$.
We have $(G_1,\varphi)<(G,\varphi)$, and thus there exists a $3$-coloring $\psi_1$ of $G_1$ extending $\varphi$.
Furthermore, $(G_2,\psi_1\restriction V(K))<(G,\varphi)$, and thus there exists a $3$-coloring $\psi_2$ of $G_2$ that matches $\psi_1$ on $K$.
The union of $\psi_1$ and $\psi_2$ is a $3$-coloring of $G$ extending $\varphi$, which is a contradiction.
Hence, every $(\le\!5)$-cycle of $G$ bounds a face.

Suppose that $G$ is not $2$-connected, and thus there exist graphs $G_1$, $G_2$ intersecting in at most one vertex
such that $G=G_1\cup G_2$, $C\subseteq G_1$ and $|V(G_1)|,|V(G_2)|\ge 4$.  Observe that for $i\in\{1,2\}$, there
exists a vertex $v_i\in V(G_i)$ incident with the common face of $G_1$ and $G_2$ such that if $G_1$ and $G_2$ intersect,
then the distance between $v_i$ and the vertex in $G_1\cap G_2$ is at least two.  Then $G+v_1v_2$ is triangle-free
and $(G+v_1v_2,\varphi)<(G,\varphi)$.  However, this implies that there exists a $3$-coloring of $G+v_1v_2$ extending $\varphi$,
which also gives such a $3$-coloring of $G$.  This is a contradiction.
\end{proof}

\begin{lemma}\label{lemma-6c}
If $(G,\varphi)$ is a minimal counterexample with the outer face bounded by a cycle $C$, then $G$ contains no induced $6$-cycle other than $C$.
\end{lemma}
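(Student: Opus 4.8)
The plan is to suppose, for contradiction, that $G$ contains an induced $6$-cycle $K=v_1v_2v_3v_4v_5v_6$ with $K\neq C$, and to split $G$ along $K$ into the subgraph $G_1$ drawn outside (and including) $K$ and the subgraph $G_2$ drawn inside (and including) $K$, so that $G=G_1\cup G_2$ and $G_1\cap G_2=K$. First I would record two easy facts. Since the induced subgraph on $V(K)$ is exactly the $6$-cycle $K$, a proper subset of $V(K)$ induces no cycle, and $V(C)=V(K)$ would force $C=K$; hence neither of $V(C),V(K)$ contains the other, so there is a vertex $v_j\in V(K)\setminus V(C)$, and some vertex of $C$ lies off $K$. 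As $C$ bounds the outer face it lies in the exterior of $K$, so that off-$K$ vertex of $C$ lies strictly outside $K$; thus $G_2$ is a proper subgraph of $G$, and $(G_2,\chi)<(G,\varphi)$ whenever $\chi$ is a valid colouring of $K$. Finally, because $G_1$ contains nothing strictly inside $K$, the open disc bounded by $K$ is a single face of $G_1$.

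The engine of the proof is the following reduction. Call an opposite pair $\{v_i,v_{i+3}\}$ \emph{good} if it has an endpoint outside $V(C)$ and $v_i,v_{i+3}$ have no common neighbour outside $K$. If a good pair exists, I would add the edge $v_iv_{i+3}$ drawn inside the $K$-face of $G_1$. Since $K$ is induced and nothing lies inside $K$ in $G_1$, the vertices $v_i,v_{i+3}$ have no common neighbour in $G_1$ at all, so $G_1+v_iv_{i+3}$ stays triangle-free; as one endpoint avoids $C$ the cycle $C$ remains induced and the outer face is unchanged, so $(G_1+v_iv_{i+3},\varphi)$ is a valid pair. It has at most as many vertices as $G$ and exactly one more edge than $G_1$, hence is strictly smaller than $(G,\varphi)$ (by vertex count if $G_1\neq G$, by edge count if $G_1=G$); by minimality $\varphi$ extends to a colouring $\psi$ of it. Now $\psi(v_i)\neq\psi(v_{i+3})$, so $\psi\restriction V(K)$ is a \emph{valid} colouring of the $6$-cycle $K$. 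Since $(G_2,\psi\restriction V(K))<(G,\varphi)$, this colouring extends over $G_2$, and gluing the two colourings along $K$ (discarding the auxiliary edge) yields a $3$-colouring of $G$ extending $\varphi$ — a contradiction.

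It therefore remains to produce a good pair. Suppose none exists. The pair through $v_j$ has an endpoint off $C$, so it must fail the second condition: $v_j$ and its opposite $v_{j+3}$ have a common neighbour $u$ outside $K$. The two cycles formed by the path $v_j\,u\,v_{j+3}$ together with each of the two arcs of $K$ are $5$-cycles, and by Lemma~\ref{lemma-basic} each of them bounds a face. The hard point is the planar bookkeeping here: these two $5$-faces account for the entire exterior of $K$, which forces $u$ to be the only vertex outside $K$ and forces $C$ to be one of these two $5$-cycles. But then at least one of the remaining opposite pairs has an endpoint off $C$, while its only possible common neighbour outside $K$ is $u$, and $u$ is adjacent to no vertex of $K$ besides $v_j,v_{j+3}$ (else one of the $5$-faces would have a chord). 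That remaining pair is therefore good, contradicting our assumption. Hence a good pair exists, and the reduction above completes the proof. I expect the main obstacle to be precisely this planar step — confirming that the two facial $5$-cycles through $u$ leave no room in the exterior of $K$ for any vertex other than $u$ — with the checks that each auxiliary pair is valid and strictly smaller being routine.
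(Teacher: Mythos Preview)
Your argument is correct and follows the same overall strategy as the paper: split $G$ along $K$ into $G_1$ (outside) and $G_2$ (inside), add a diagonal $v_iv_{i+3}$ to $G_1$ with $v_i\notin V(C)$, use minimality to colour $G_1+v_iv_{i+3}$, observe that the restriction to $K$ is valid because the diagonal forces $\psi(v_i)\neq\psi(v_{i+3})$, then use minimality again on $G_2$.

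The only substantive difference is how you and the paper justify that some diagonal can be added without creating a triangle. The paper picks $v_1\notin V(C)$ of \emph{minimum degree in $G_1$} and argues: if $v_1,v_4$ had a common neighbour $x\in V(G_1)\setminus V(K)$, then the $5$-cycle $v_1v_2v_3v_4x$ bounds a face, so $v_2$ has degree~$2$ in $G_1$; since $v_2$'s two $C$-neighbours would have to lie among $\{v_1,v_3\}$ but $v_1\notin V(C)$, in fact $v_2\notin V(C)$, contradicting the minimality of $\deg_{G_1}(v_1)$. Your route is instead to observe that if one diagonal fails then \emph{both} $5$-cycles through the common neighbour $u$ bound faces, which pins down $G_1$ completely as $K+u+uv_j+uv_{j+3}$; then $C\in\{Q_1,Q_2\}$ and one of the other two diagonals is automatically good. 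The paper's minimum-degree trick is slicker and avoids the explicit planar bookkeeping, while your version is more concrete and makes the structure of the obstruction visible; both are short and either would serve.

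One small clarification you should make when writing it up: ``common neighbour outside $K$'' should be read as ``common neighbour in $V(G_1)\setminus V(K)$'', since that is what controls triangle-freeness of $G_1+v_iv_{i+3}$ and what places $u$ in the exterior of $K$ in the failure case. With that reading your planar step (the two $5$-faces exhaust the exterior of $K$) is exactly right.
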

\begin{proof}
Suppose that $G$ contains an induced $6$-cycle $K\neq C$.  
Let $G_1$ be the subgraph of $G$ drawn outside (and including) $K$, and
let $G_2$ be the subgraph of $G$ drawn inside (and including) $K$.  
Since $K\neq C$ and $C$ is an induced cycle, we have $V(K)\not\subseteq V(C)$.
Let us label the vertices of $K$ by $v_1$, $v_2$, \ldots $v_6$ in order so that $v_1\not\in V(C)$ and subject to that, the degree of $v_1$
in $G_1$ is as small as possible. 
Let $G'_1=G_1+v_1v_4$.

Note that $C$ is an induced cycle bounding the outer face of $G'_1$.  If $G'_1$ contains a triangle, then $G$ contains
a $5$-cycle $Q=v_1v_2v_3v_4x$ with $x\in V(G_1)\setminus V(K)$,
which bounds a face by Lemma~\ref{lemma-basic}.  Hence, the path $v_1v_2v_3$ is contained in boundaries of two
distinct faces ($K$ and $Q$) in $G_1$, and thus $v_2$ has degree two in $G_1$.  However, $v_1$ has at least three neighbors $v_2$, $v_3$
and $x$ in $G_1$, which contradicts the choice of the labels of the vertices of $K$.  Therefore, $G'_1$ is triangle-free.
Note also that either $|V(G'_1)|<|V(G)|$ (if $K$ does not bound a face), or $|V(G'_1)|=|V(G)|$ and $|E(G'_1)|>|E(G)|$ (if $K$ bounds a face).
Hence, $(G'_1,\varphi)<(G,\varphi)$, and thus there exists a $3$-coloring $\psi_1$ of $G'_1$ extending $\varphi$.
Because of the edge $v_1v_4$, $\psi_1\restriction V(K)$ is a valid coloring of $K$.
Since $K$ is an induced cycle, we have $V(C)\not\subseteq V(K)$, and thus $|V(G_2)|<|V(G)|$ and $(G_2,\psi_1\restriction V(K))<(G,\varphi)$.
Therefore, there exists a $3$-coloring $\psi_2$ of $G_2$ that matches $\psi_1$ on $K$.
The union of $\psi_1$ and $\psi_2$ is a $3$-coloring of $G$ extending $\varphi$, which is a contradiction.
\end{proof}

\begin{lemma}\label{lemma-4c}
If $(G,\varphi)$ is a minimal counterexample with the outer face bounded by a cycle $C$, then $G$ contains no $4$-cycle other than $C$.
\end{lemma}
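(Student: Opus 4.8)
The plan is to argue by contradiction via the standard collapse of a $4$-face. Suppose $G$ contains a $4$-cycle $K=v_1v_2v_3v_4$ with $K\neq C$. Since $G$ is triangle-free, $K$ is induced, and by Lemma~\ref{lemma-basic} it bounds a face; as the outer face is bounded by $C\neq K$, this is an interior face with no vertex of $G$ in its interior. I would then form a smaller graph $G'$ by identifying one pair of opposite vertices of $K$, say $v_1$ and $v_3$ (which are non-adjacent, so no loop arises), routing the identification through the empty interior of the face so that $G'$ is again a plane graph, and deleting the parallel edges to $v_2$ and $v_4$ that this produces. Because $v_1$ and $v_3$ are opposite on a face with empty interior, any $3$-coloring of $G'$ lifts to a $3$-coloring of $G$: give both $v_1$ and $v_3$ the color of the merged vertex, and then every edge of $K$ is properly colored. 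Since the identification decreases the number of vertices by one, $|V(G')|<|V(G)|$, so $(G',\varphi')<(G,\varphi)$ for the transported boundary coloring $\varphi'$.

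The crux is to check that $(G',\varphi')$ is again a \emph{valid} pair, so that minimality of $(G,\varphi)$ applies. Three things must hold: $G'$ is triangle-free, its outer face is still bounded by an induced cycle of length at most six, and $\varphi'$ is still valid. For triangle-freeness, a new triangle through the merged vertex would require a vertex adjacent to $v_1$ and a distinct vertex adjacent to $v_3$ that are themselves adjacent, that is, a path $v_1abv_3$ of length three internally disjoint from $K$; together with $v_1v_2v_3$ this yields a $5$-cycle, which by Lemma~\ref{lemma-basic} bounds a face. The path $v_1v_2v_3$ would then lie on the boundaries of two distinct faces, forcing $v_2$ to have degree two and hence (again by Lemma~\ref{lemma-basic}) to lie on $C$ with $v_1,v_3$ as its two $C$-neighbors. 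Thus a triangle can appear only when \emph{both} identified vertices lie on $C$.

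I would exploit this by choosing the diagonal so that at least one endpoint is \emph{not} on $C$; such a diagonal always exists, since $K\neq C$ with $C$ induced forces $V(K)\not\subseteq V(C)$, and a short case check on the number of vertices of $K$ lying on $C$ shows that one of the two diagonals always avoids having both endpoints on $C$. With this choice no triangle is created, and if both chosen endpoints are internal then $C$ is untouched and $\varphi'=\varphi$ is trivially valid. The genuinely delicate case, which I expect to be the main obstacle, is when exactly one endpoint of the chosen diagonal lies on $C$: the merged vertex replaces that endpoint on a cycle of the same length, so $\varphi'$ is still valid, but I must rule out that the internal endpoint contributes a chord of $C$ in $G'$. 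Such a chord would correspond in $G$ to an edge from the interior endpoint to some $u\in V(C)$ at $C$-distance $d$ from the on-$C$ endpoint; since $|C|\le 6$ we have $d\in\{2,3\}$, giving either a $5$-cycle sharing $v_1v_2v_3$ with $K$ (which again collapses to a degree-two interior vertex, contradicting Lemma~\ref{lemma-basic}) or an induced $6$-cycle distinct from $C$ (excluded by Lemma~\ref{lemma-6c}), a non-induced $6$-cycle being reduced to the previous situation through its chord. Tracking exactly which short cycles arise in this last case and invoking the earlier lemmas is where the real work lies.
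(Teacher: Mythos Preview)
Your overall strategy---collapse a $4$-face by identifying a diagonal chosen so that at least one endpoint lies off $C$---is exactly the paper's, and your handling of the triangle case is correct.  The gap is in the chord case with $d=3$.  You assert that when the $6$-cycle $v_1v_2v_3uyx$ (with $u$ at $C$-distance $3$ from $v_1$) is non-induced, its chord reduces you to the $d=2$ situation; but the only chords not already excluded by triangle-freeness and by $C$ being induced are $v_3y$, $v_3x$, and $v_2y$, and only the first of these is a $d=2$ edge.  Concretely, take $|C|=6$, write $C=v_1w_2w_3w_4w_5w_6$, and let $G$ have a single interior vertex $v_3$ adjacent to $w_2,w_4,w_6$.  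Then $K=v_1w_2v_3w_6$ is your $4$-face, the only diagonal with an off-$C$ endpoint is $\{v_1,v_3\}$, the edge $v_3w_4$ sits at $C$-distance $3$, and each non-degenerate $6$-cycle through $v_3w_4$ has its sole chord going to $w_2$ or $w_6$ (distance $1$); the resulting $4$-cycle is just $K$ again, so your reduction stalls.  Identifying $v_1$ with $v_3$ here genuinely produces an outer $6$-cycle with a chord, so $(G',\varphi)$ is not valid.

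The paper does not try to rule this configuration out.  Instead, once the outer face of $G'$ acquires a chord it deduces $|C|=6$, applies Lemma~\ref{lemma-6c} to the two $6$-cycles through $v_3w_4$ to force $v_3w_2,v_3w_6\in E(G)$, and then Lemma~\ref{lemma-basic} pins down $V(G)=V(C)\cup\{v_3\}$.  At that point it colors $G$ directly: validity of $\varphi$ guarantees two opposite vertices of $C$ with distinct colors, say $\varphi(v_1)\neq\varphi(w_4)$, and setting $\psi(v_3)=\varphi(v_1)$ finishes.  This is in fact the only place in the entire paper where the ``two opposite vertices receive different colors'' clause in the definition of a valid coloring of a $6$-cycle is ever invoked; your proposal never uses it, which is the telltale sign that the $d=3$ case cannot be closed by short-cycle bookkeeping alone.
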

\begin{proof}
Suppose that $G$ contains a $4$-cycle $K\neq C$.  By Lemma~\ref{lemma-basic}, $K$ bounds a face.  Let $v_1$, \ldots, $v_4$ be the
vertices of $K$ in order.  Since $K\neq C$ and $C$ is an induced cycle, we can assume that $v_3\not\in V(C)$.  Let $G_1$ be the graph
obtained from $G$ by identifying $v_1$ with $v_3$.  Note that each $3$-coloring of $G_1$ corresponds to a $3$-coloring of $G$, and thus
$\varphi$ does not extend to a $3$-coloring of $G_1$.  Since $|V(G_1)|<|V(G)|$, it follows that the pair $(G_1,\varphi)$ is not valid.
There are two possibilities: either $G_1$ contains a triangle or its outer face is not an induced cycle.

If $G_1$ contains a triangle, then $G$ contains a $5$-cycle $Q=v_1v_2v_3xy$.  By Lemma~\ref{lemma-basic}, $Q$ bounds a face, hence
the path $v_1v_2v_3$ is contained in boundaries of two distinct faces ($K$ and $Q$).  It follows that
$v_2$ has degree two, and by Lemma~\ref{lemma-basic}, $v_2$ is incident with the outer face.  However, this implies that $v_3$ is incident
with the outer face as well, contrary to its choice.

It remains to consider the case that the outer face of $G_1$ is not an induced cycle.  Since $G_1$ contains no triangle, it follows
that the outer face of $G_1$ has length $6$.  Hence, $C=v_1w_2w_3w_4w_5w_6$ and $v_3$ is adjacent to $w_4$.  We choose the labels so that
either $v_2=w_2$ or $v_2$ is contained inside the $6$-cycle $Q=v_1v_4v_3w_4w_3w_2$.  By Lemma~\ref{lemma-6c}, $Q$ is not an induced cycle,
and since $C$ is an induced cycle and $G$ is triangle-free, we conclude that $v_3w_2\in E(G)$.  The symmetric argument for the $6$-cycle
$v_1v_2v_3w_4w_5w_6$ implies that $v_3w_6\in E(G)$.
By Lemma~\ref{lemma-basic}, $w_2v_1w_6v_3$, $w_2v_3w_4w_3$ and $w_6v_3w_4w_5$ bound faces,
hence $V(G)=V(C)\cup \{v_3\}$.  Since $\varphi$ is a valid coloring of $C$, two opposite vertices of $C$ have different colors;
say $\varphi(v_1)\neq \varphi(w_4)$.  Then, we can properly color $v_3$ by $\varphi(v_1)$.  This is a contradiction.
\end{proof}

\begin{corollary}\label{cor-6c}
If $(G,\varphi)$ is a minimal counterexample with the outer face bounded by a cycle $C$, then $G$ contains no $6$-cycle other than $C$.
\end{corollary}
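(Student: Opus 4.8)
The plan is to reduce any spurious $6$-cycle to configurations already ruled out by Lemmas~\ref{lemma-6c} and~\ref{lemma-4c}. I would argue by contradiction: suppose $G$ contains a $6$-cycle $K\neq C$. If $K$ were induced, then Lemma~\ref{lemma-6c} would immediately yield a contradiction, so I may assume that $K$ has at least one chord. The entire argument then hinges on classifying which chords are possible in a triangle-free graph.

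Writing $K=v_1v_2\cdots v_6$, any chord joins two vertices at cyclic distance either two or three along $K$. A chord at cyclic distance two, say $v_1v_3$, would close up the triangle $v_1v_2v_3$, which is forbidden since $G$ is triangle-free. Hence every chord of $K$ must be a \emph{long diagonal} joining a pair of antipodal vertices, say $v_1v_4$. This is the one observation that makes the proof work, and it is where triangle-freeness is used.

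Such a diagonal splits $K$ into the two $4$-cycles $v_1v_2v_3v_4$ and $v_1v_4v_5v_6$. These two $4$-cycles are distinct, since they share only the edge $v_1v_4$ and otherwise use the disjoint vertex sets $\{v_2,v_3\}$ and $\{v_5,v_6\}$. In particular, at least one of them differs from $C$, contradicting Lemma~\ref{lemma-4c}, which asserts that $C$ is the only $4$-cycle in $G$. I expect no genuine obstacle here: once triangle-freeness forces every chord to be a long diagonal, Lemmas~\ref{lemma-6c} and~\ref{lemma-4c} do all the remaining work, so the corollary follows by a short case analysis on whether $K$ is induced.
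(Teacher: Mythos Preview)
Your argument is correct and is essentially the same as the paper's proof: the paper also splits into the induced case (handled by Lemma~\ref{lemma-6c}) and the non-induced case, observing that a chord of a $6$-cycle in a triangle-free graph must be a long diagonal and hence produces two distinct $4$-cycles, contradicting Lemma~\ref{lemma-4c}. The paper states this in two sentences without spelling out the chord classification, but the content is identical.
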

\begin{proof}
No $6$-cycle in $G$ other than $C$ is induced by Lemma~\ref{lemma-6c}.  However, a non-induced $6$-cycle would imply the presence
of at least two $4$-cycles, contradicting Lemma~\ref{lemma-4c}.
\end{proof}

The following is the main reduction enabling us to eliminate $5$-faces incident with too many vertices of degree three.
Thomassen~\cite{ThoGro} uses a different reduction in this case, which however is slightly more difficult to argue about.

\begin{lemma}\label{lemma-mainred}
Let $(G,\varphi)$ be a minimal counterexample whose outer face is bounded by a cycle $C$.  Let $K=v_1v_2v_3v_4v_5$ be a cycle bounding
a $5$-face in $G$ such that $v_1$, $v_2$, $v_3$ and $v_4$ have degree three and do not belong to $V(C)$.  Then at least one
of the neighbors of $v_1$, \ldots, $v_4$ outside $K$ belongs to $V(C)$.
\end{lemma}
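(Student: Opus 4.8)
\emph{Setup and local structure.} The plan is to argue by contradiction, so suppose that none of the outer neighbours of $v_1,\dots,v_4$ lies on $C$. Since each $v_i$ ($1\le i\le 4$) has degree three with exactly two neighbours on $K$, it has a unique third neighbour $u_i\notin V(K)$, and by assumption $u_1,u_2,u_3,u_4\notin V(C)$. First I would record the local structure forced by the earlier results: the $u_i$ are pairwise distinct and pairwise non-adjacent, and none of them is adjacent to $v_5$. Each coincidence or edge would create a short cycle through some of the $v_j$: a triangle (e.g.\ $u_1=u_2$ gives $v_1v_2u_1$, and $u_1v_5\in E$ gives $v_1v_5u_1$), contradicting that $G$ is triangle-free; a $4$-cycle other than $C$ (e.g.\ $u_1=u_3$ gives $v_1v_2v_3u_1$, and $u_1u_2\in E$ gives $v_1v_2u_2u_1$), contradicting Lemma~\ref{lemma-4c}; or a $6$-cycle other than $C$ (e.g.\ $u_1u_4\in E$ gives $u_1v_1v_2v_3v_4u_4$, and $u_1u_3\in E$ gives $u_1v_1v_5v_4v_3u_3$), contradicting Corollary~\ref{cor-6c}. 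The same kind of cycle shows that $u_1$ and $v_5$ have no common neighbour in $G$ other than $v_1$ (another common neighbour $z$ would give the $4$-cycle $u_1zv_5v_1$).

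\emph{A list-colouring observation.} The key point is that extending a colouring over the four degree-three vertices is a path-colouring problem. Given any proper $3$-colouring of $H:=G-\{v_1,v_2,v_3,v_4\}$, extending it to the path $v_1v_2v_3v_4$ means colouring a $P_4$ in which the internal vertices $v_2,v_3$ have lists $\{1,2,3\}\setminus\{\varphi(u_2)\}$ and $\{1,2,3\}\setminus\{\varphi(u_3)\}$ of size two, while the endpoints $v_1,v_4$ have lists $\{1,2,3\}\setminus\{\varphi(v_5),\varphi(u_1)\}$ and $\{1,2,3\}\setminus\{\varphi(v_5),\varphi(u_4)\}$. Colouring greedily from one end shows that such a $P_4$ can always be completed as soon as one endpoint list has size two, and the list of $v_1$ has size two precisely when $\varphi(u_1)=\varphi(v_5)$. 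Hence it suffices to produce a valid pair smaller than $(G,\varphi)$ in which the equality $\varphi(u_1)=\varphi(v_5)$ is forced.

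\emph{The reduction.} I would force this equality by contracting: let $G'$ be obtained from $H$ by identifying $u_1$ with $v_5$. Because $u_1\neq v_5$, $u_1v_5\notin E(G)$, $u_1$ and $v_5$ have no common neighbour in $H$ (their only common neighbour in $G$ was the deleted vertex $v_1$), and they lie on the common face opened up by deleting $v_1,\dots,v_4$, the identification yields a \emph{simple plane} graph $G'$; since $u_1\notin V(C)$, its outer face is still bounded by the induced cycle $C$, and $|V(G')|<|V(G)|$. If $G'$ is triangle-free, then $(G',\varphi)$ is a valid pair with $(G',\varphi)<(G,\varphi)$, so by minimality $\varphi$ extends to a $3$-colouring of $G'$. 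Pulling this colouring back to $H$ gives a colouring with $\varphi(u_1)=\varphi(v_5)$, which by the previous paragraph extends to the path and hence to all of $G$, contradicting that $(G,\varphi)$ is a counterexample.

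\emph{The main obstacle.} The step I expect to be the crux is exactly the triangle-freeness of $G'$. A new triangle can only come from an edge $xy$ with $x\in N(u_1)$ and $y\in N(v_5)$; such an edge is precisely a witness that the face incident with $v_1$ and bounded in part by $v_1u_1$ and $v_1v_5$ is a $5$-face $v_1u_1xyv_5$ (it bounds a face by Lemma~\ref{lemma-basic}). By the left--right symmetry of $K$, the analogous contraction of $u_4$ into $v_5$ succeeds unless the corresponding face at $v_4$ is also a $5$-face. The only genuinely delicate case is therefore that \emph{both} of these faces are $5$-faces; here I would exploit the extra neighbours of $v_5$ that the two $5$-faces supply, together with the remaining degree-three vertices $v_2,v_3$, to locate a $4$- or $6$-cycle other than $C$ (contradicting Lemma~\ref{lemma-4c} or Corollary~\ref{cor-6c}) or to carry out a safe contraction of a different pair. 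The border situation $v_5\in V(C)$, in which contracting into $v_5$ could create a chord of $C$, is handled in the same spirit. It is this short case analysis that uses the full strength of the hypothesis that all four of $v_1,\dots,v_4$ have degree three.
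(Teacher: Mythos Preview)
Your approach differs from the paper's: instead of adding the edge $u_1u_4$ \emph{and} identifying $u_2$ with $u_3$, you identify $u_1$ with $v_5$, arguing that the single constraint $\varphi(u_1)=\varphi(v_5)$ already suffices to colour the path $v_1v_2v_3v_4$. That list-colouring observation is correct and pleasant, and your local analysis (distinctness and non-adjacency of the $u_i$, no common neighbour of $u_1$ and $v_5$ in $H$) is fine.

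The gap is exactly where you flag it. If the contraction $u_1\sim v_5$ creates a triangle, you correctly deduce a $5$-face $F_1=v_1u_1abv_5$, and by symmetry if $u_4\sim v_5$ also fails there is a $5$-face $F_4=v_4u_4cdv_5$. But your plan for this doubly-bad case (``locate a $4$- or $6$-cycle \dots\ or carry out a safe contraction of a different pair'') is not carried out, and it is not clear it can be. No short cycle is forced: the vertices $a,b,c,d$ and the faces across $v_1v_2$, $v_2v_3$, $v_3v_4$ can all be new and of length $\ge 7$ with no coincidences. Switching to a different contraction does not obviously help either: for instance $u_2\sim v_5$ \emph{is} always triangle-free (a $2$-path $u_2zv_5$ in $H$ would give a $5$-cycle $u_2zv_5v_1v_2$ whose face would have to coincide with $K$, and a $3$-path would give a forbidden $6$-cycle through $v_1,v_2$), but the resulting constraint $\varphi(u_2)=\varphi(v_5)$ does \emph{not} guarantee extendability---take $\varphi(v_5)=\varphi(u_2)=\varphi(u_3)=1$ and $\varphi(u_1)=\varphi(u_4)=2$; then $v_1,v_4$ are forced to colour $3$ and $v_2,v_3$ cannot be coloured. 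The case $v_5\in V(C)$, where the merged vertex may acquire a chord of $C$, is a further loose end.

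The paper avoids all of this by performing two operations at once on $H$: add the edge $u_1u_4$ and identify $u_2$ with $u_3$. Triangle-freeness is then a single short check (a new triangle would yield a forbidden $6$-cycle through $v_2v_3$ or through $v_1v_5v_4$, or a matching between $\{u_1,u_4\}$ and $\{u_2,u_3\}$ contradicting planarity or Lemma~\ref{lemma-4c}), and the two constraints $\varphi(u_1)\neq\varphi(u_4)$ and $\varphi(u_2)=\varphi(u_3)$ together make the extension to $v_1,\dots,v_4$ immediate, with no residual case analysis.
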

\begin{proof}
Let $x_1$, \ldots, $x_4$ be the neighbors of $v_1$, \ldots, $v_4$, respectively, outside of $K$.  Suppose that none of these vertices
belongs to $V(C)$.  Let $G'$ be the graph obtained from $G-\{v_1,v_2,v_3,v_4\}$ by adding the edge $x_1x_4$ and by identifying $x_2$ with $x_3$.
Note that $C$ is an induced cycle bounding the outer face of $G'$.

If $G'$ contained a triangle, then $G$ would contain
a $6$-cycle $x_2v_2v_3x_4ab$ or $x_1v_1v_5v_4x_4a$ (contrary to Corollary~\ref{cor-6c}) or a matching between $\{x_1,x_4\}$ and $\{x_2,x_3\}$
(contrary to either planarity or Lemma~\ref{lemma-4c}).  Hence, $(G',\varphi)<(G,\varphi)$ is valid and there exists a $3$-coloring $\psi$ of $G'$
extending $\varphi$.  Note that $\psi(x_1)\neq\psi(x_4)$; hence, we can choose colors $\psi(v_1)\not\in \{\psi(x_1),\psi(v_5)\}$ and
$\psi(v_4)\not\in \{\psi(x_4),\psi(v_5)\}$ so that $\psi(v_1)\neq \psi(v_4)$.  Since $\psi(x_2)=\psi(x_3)$, observe that we can extend this
coloring to $v_2$ and $v_3$.  This gives a $3$-coloring of $G$ extending $\varphi$, which is a contradiction.
\end{proof}

We can now proceed with the discharging phase of the proof.

\begin{lemma}\label{lemma-disch}
If $(G,\varphi)$ is a valid pair, then $\varphi$ extends to a $3$-coloring of $G$.
\end{lemma}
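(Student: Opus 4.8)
The plan is to argue by contradiction using the discharging method, so suppose $(G,\varphi)$ is a minimal counterexample with outer cycle $C$. First I would collect the structural consequences of the preceding lemmas: by Lemma~\ref{lemma-basic} the graph $G$ is $2$-connected with $\delta(G)\ge 2$, every vertex of degree two lies on $C$, and every $(\le\!5)$-cycle bounds a face; by Lemma~\ref{lemma-4c} and Corollary~\ref{cor-6c} there is no $4$- or $6$-cycle other than $C$. Since $G$ is $2$-connected every face is bounded by a cycle, and as $G$ is triangle-free these forbidden lengths leave every bounded face with length exactly $5$ or at least $7$. In particular every vertex not on $C$ has degree at least three, so the only objects carrying negative charge will be the degree-two vertices (all on $C$) and the degree-three vertices.

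Next I would set up the charges via Euler's formula. Assigning to each vertex $v$ the charge $d(v)-4$ and to each face $f$ the charge $\ell(f)-4$, the handshake identities $\sum_v d(v)=\sum_f \ell(f)=2|E(G)|$ together with $|V(G)|-|E(G)|+|F(G)|=2$ give total charge $-8$. With this normalization a degree-two vertex has charge $-2$, a degree-three vertex has charge $-1$, a pentagon has charge $+1$, a face of length at least seven has charge at least $3$, and the outer face has charge $|C|-4\le 2$. The goal is then to redistribute charge so that every vertex and every bounded face ends nonnegative; the total being fixed at $-8$, all of the leftover deficit must then sit on the outer face, and bounding how much charge the outer face can release will produce the contradiction.

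The discharging rule I would use sends charge from the charge-rich faces to the charge-poor low-degree vertices: each incident face sends $\tfrac13$ to every degree-three vertex and $1$ to every degree-two vertex, with the outer face supplying the remaining $1$ needed by each incident degree-two vertex. A degree-three vertex lies on three faces and so receives $3\cdot\tfrac13=1$, reaching $0$; a degree-two vertex lies on the outer face and one bounded face and receives $1+1=2$, reaching $0$. A face of length at least seven starts with charge at least $3$ and has at least seven incident vertices, so it easily survives. The crucial case is a pentagon, whose budget is only $+1$: since any four vertices of a $5$-cycle are consecutive, Lemma~\ref{lemma-mainred} shows that a pentagon all of whose vertices and whose outside-neighbours avoid $C$ has at most three degree-three vertices, hence releases at most $3\cdot\tfrac13=1$ and stays nonnegative.

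Finally, since every vertex and every bounded face is nonnegative and the total is $-8$, the outer face must finish with charge at most $-8$; on the other hand it starts with $|C|-4$ and sends out at most $1$ per incident vertex, hence at most $|C|\le 6$ in total, so its final charge is at least $(|C|-4)-|C|=-4>-8$ --- a contradiction. I expect the main obstacle to be the boundary interface rather than the clean interior computation: pentagons that touch $C$, or that carry a degree-three vertex whose outside-neighbour lies on $C$, are exactly the configurations Lemma~\ref{lemma-mainred} leaves untreated, and for these the simple rule above must be supplemented by tailored rules routing charge from $C$ and from the outer face. Arranging these boundary rules so that they simultaneously keep the near-boundary pentagons nonnegative and keep the outer face's total outflow below the threshold needed for the contradiction is where essentially all of the remaining casework will lie.
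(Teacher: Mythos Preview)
Your overall strategy---initial charges $\deg(v)-4$ and $|f|-4$, faces paying $\tfrac13$ to incident degree-three vertices, and invoking Lemma~\ref{lemma-mainred} for pentagons---matches the paper. The difference, and the genuine gap, is entirely in the boundary handling you flag at the end. Your proposed rule is strictly more generous than the paper's: you have bounded faces send $1$ to each incident degree-two vertex and $\tfrac13$ to \emph{every} degree-three vertex, including those on $C$. This is too expensive. A pentagon sharing an edge with $C$ through a degree-two vertex already spends its whole budget of $1$ on that vertex alone; any further degree-three incidence (and there will be several, including the two $C$-vertices flanking the degree-two vertex, which under your rule also receive $\tfrac13$) drives it negative. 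Your closing computation that the outer face ends at $\ge -4$ is therefore premature: it is based solely on the ``simple'' rule, yet you acknowledge that additional charge must still be routed from $C$ or the outer face to rescue the near-boundary pentagons, and you give no bound on how much that costs.

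The paper sidesteps this by \emph{not} trying to make the $C$-vertices nonnegative. Its rule has inner faces send only $\tfrac13$ to degree-two vertices and nothing at all to degree-three vertices lying on $C$; conversely, each vertex $z\in V(C)$ sends $\tfrac13$ to every $5$-face ``tied'' to it (one having a degree-three vertex off $C$ adjacent to $z$), which is exactly the supplementary rule that Lemma~\ref{lemma-mainred} is designed to feed. Then every internal vertex and every bounded face is nonnegative, while the $C$-vertices carry bounded deficits ($\ge -\tfrac53$ for degree two, $\ge -\tfrac43$ for degree $\ge 3$), and summing these with the outer face's $|C|-4$ gives a total strictly above $-8$. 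If you want to complete your version, the fix is to drop the attempt to zero out boundary vertices and instead treat $V(C)$ together with the outer face as a single reservoir whose total deficit you bound directly.
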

\begin{proof}
Suppose that $\varphi$ does not extend to a $3$-coloring of $G$; choose a valid pair $(G,\varphi)$ with this property
that is minimal with respect to $<$.  Thus, $(G,\varphi)$ is a minimal counterexample.  Clearly, $G$ has a vertex not incident with
its outer face.
Let the \emph{initial charge} $c_0(v)$ of a vertex $v$ of $G$ be defined as $\deg(v)-4$ and the initial charge $c_0(f)$ of a face $f$ of $G$
as $|f|-4$.

Let $C$ be the cycle bounding the outer face of $G$.  A $5$-face $Q$ is \emph{tied} to a vertex $z\in V(C)$ if
$z\not\in V(Q)$ and $z$ has a neighbor in $V(Q)\setminus V(C)$ of degree three.
Let us redistribute the charge as follows: each face other than the outer one sends $1/3$ to each incident vertex that
either has degree two, or has degree three and does not belong to $V(C)$.  Each vertex of $C$ sends $1/3$ to each $5$-face tied to
it.  Let the charge obtained by these rules be called \emph{final} and denoted by $c$.

First, let us argue that the final charge of each vertex $v\in V(G)\setminus V(C)$ is non-negative: by Lemma~\ref{lemma-basic},
$v$ has degree at least three.  If $v$ has degree at least four, then $c(v)\ge c_0(v)=\deg(v)-4\ge 0$.  If $v$ has degree three,
then it receives $1/3$ from each incident face, and $c(v)=c_0(v)+1=0$.

Next, consider the charge of a face $f$ distinct from the outer one.  By Lemma~\ref{lemma-4c}, we have $|f|\ge 5$.  The face $f$
sends at most $1/3$ to each incident vertex, and thus its final charge is $c(f)\ge c_0(f)-|f|/3=2|f|/3-4$.  Hence, $c(f)\ge 0$ unless $|f|=5$.
Suppose that $|f|=5$ and let $k$ be the number of vertices to that $f$ sends charge.  We have $c(f)=c_0(f)-k/3=1-k/3$.  If $k\le 3$, then $c(f)\ge 0$,
and thus we can assume that $k\ge 4$.
If $f$ is incident with a vertex $v$ of degree two, then note that $v\in V(C)$ by Lemma~\ref{lemma-basic}.  Furthermore, since
$G$ is $2$-connected and $G\neq C$, we conclude that $f$ is incident with at least two vertices of degree three belonging to $V(C)$, to which
$f$ does not send charge.  This contradicts the assumption that $k\ge 4$.
Hence, no vertex of degree two is incident with $f$, and thus $k$ is the number of vertices of $V(f)\setminus V(C)$ of degree three.
By Lemma~\ref{lemma-mainred}, $f$ is tied to at least $k-3$ vertices of $C$, and thus $c(f)\ge c_0(f)-k/3+(k-3)/3=0$.

The final charge of the outer face is $|C|-4$.  Consider a vertex $v\in V(C)$.  If $\deg(v)=2$, then $v$ receives $1/3$ from the incident non-outer face
and $c(v)=-5/3$.  If $\deg(v)\ge 3$, then $v$ sends $1/3$ to at most $\deg(v)-2$ faces tied to it, and thus
$c(v)\ge c_0(v)-(\deg(v)-2)/3=2\deg(v)/3 - 10/3\ge -4/3$.

Note that since $G$ is $2$-connected and $G\neq C$, the outer face is incident with at least two vertices of degree greater than two.
Therefore, the sum of the final charges is at least $(|C|-4)-5(|C|-2)/3-2\cdot 4/3=-10/3-2|C|/3>-8$, since $|C|\le 6$.
On the other hand, the sum of final charges is equal to the sum of the initial charges, which (if $G$ has $n$ vertices, $m$ edges and $s$ faces) is
\begin{eqnarray*}
\sum_v c_0(v)+\sum_f c_0(f)&=&\sum_v (\deg(v)-4)+\sum_f (|f|-4)\\
&=&(2m-4n)+(2m-4s)=4(m-n-s)\\
&=&-8
\end{eqnarray*}
by Euler's formula.  This is a contradiction.
\end{proof}

The proof of Gr\"otzsch theorem is now straightforward.

\begin{theorem}
Every planar triangle-free graph is $3$-colorable.
\end{theorem}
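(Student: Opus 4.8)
The plan is to derive the theorem from Lemma~\ref{lemma-disch} by reducing an arbitrary triangle-free planar graph to an instance in which a short induced facial cycle can serve as the outer boundary. Suppose for contradiction that the theorem fails, and let $G$ be a counterexample with $|V(G)|$ minimum. First I would argue that $G$ is connected: otherwise some connected component is a smaller counterexample, contradicting minimality. Next I would show $\delta(G)\ge 3$, since a vertex $v$ of degree at most two could be removed, the graph $G-v$ colored by minimality, and $v$ then colored differently from its at most two neighbors. Finally I would establish that $G$ is $2$-connected, because a cut vertex $u$ would split $G$ into smaller graphs $G_1,G_2$ with $G_1\cap G_2=\{u\}$, each $3$-colorable by minimality; after permuting the colors of $G_2$ to agree at $u$, the colorings combine into a $3$-coloring of $G$.

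With $G$ now $2$-connected, triangle-free, planar and of minimum degree at least three, the key step is to locate a short face. Here a counting argument via Euler's formula does the work: if every face had length at least six, then $2|E(G)|\ge 6(|E(G)|-|V(G)|+2)$ would give $|E(G)|\le \tfrac32(|V(G)|-2)$, contradicting $2|E(G)|\ge 3|V(G)|$, which follows from $\delta(G)\ge 3$. Hence $G$ has a face of length at most five, which (as $G$ is $2$-connected) is bounded by a cycle $C$; since $G$ is triangle-free, $4\le |C|\le 5$.

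I would then embed $G$ so that this face is the outer face. Because $|C|\le 5$ and $G$ is triangle-free, $C$ has no chord and is therefore induced, and moreover every proper $3$-coloring of $C$ is valid by definition. Choosing any proper $3$-coloring $\varphi$ of $C$ (which exists, as $C_4$ and $C_5$ are $3$-colorable), the pair $(G,\varphi)$ is valid. By Lemma~\ref{lemma-disch}, $\varphi$ extends to a $3$-coloring of $G$, so $G$ is $3$-colorable, contradicting the choice of $G$.

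The main obstacle, and the only genuinely non-trivial point, is the existence of a short facial cycle: a triangle-free planar graph of large girth need not have any face of length at most six, so one cannot simply pick an arbitrary face as the outer boundary. The reductions to minimum degree at least three and to $2$-connectedness are exactly what is needed to force, via Euler's formula, a facial cycle of length four or five, on which every $3$-coloring is automatically valid; everything else is routine.
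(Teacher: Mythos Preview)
Your argument is correct and follows essentially the same route as the paper: take a vertex-minimal counterexample, reduce to minimum degree at least three, use Euler's formula to find a face of length at most five, redraw with that face outside, and invoke Lemma~\ref{lemma-disch}. Your explicit reductions to connectedness and $2$-connectedness are not in the paper's version but are harmless (indeed, $2$-connectedness is what justifies that the short face is bounded by a \emph{cycle}, a point the paper leaves implicit).
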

\begin{proof}
Suppose for a contradiction that $G$ is a planar triangle-free graph that is not $3$-colorable, chosen with as few vertices as possible.
Clearly, $G$ has minimum degree at least three (as otherwise we can remove a vertex $v$ of degree at most two, $3$-color the rest of the
graph by the minimality of $G$, and color $v$ differently from its neighbors).  Hence, Euler's formula implies that every drawing of $G$
in the plane has a face of length at most $5$.  Fix a drawing of $G$ such that the outer face is bounded by a cycle $C$ of length at most
$5$.  Since $G$ is triangle-free, the cycle $C$ is induced.  Let $\varphi$ be an arbitrary $3$-coloring of $C$.  By Lemma~\ref{lemma-disch},
$\varphi$ extends to a $3$-coloring of $G$, which is a contradiction.
\end{proof}


\begin{thebibliography}{99}

\bibitem{DKT} Z.~Dvo\v{r}\'ak, K. Kawarabayashi, R. Thomas,
3-coloring triangle-free planar graphs in linear time,
{\em ACM Transactions on Algorithms} {\bf 7} (2011), article no. 41.

\bibitem{Gro} H.~Gr\"otzsch, 
Ein Dreifarbensatz f\"ur dreikreisfreie Netze auf der Kugel,
{\em Wiss. Z. Martin-Luther-Univ. Halle-Wittenberg Math.-Natur. Reihe}
{\bf 8} (1959), 109--120.

\bibitem{ThoGro} C.~Thomassen,
Gr\"otzsch's $3$-color theorem and its counterparts
for the torus and the projective plane,
{\em J.~Combin.\ Theory Ser.\ B} {\bf62} (1994), 268--279.

\bibitem{ThoShortlist} C. Thomassen, 
A short list color proof of Gr\"otzsch's theorem,
{\em J.~Combin.\ Theory Ser.\ B} {\bf88} (2003), 189--192.

\end{thebibliography}
\end{document}